\font\smallit=cmti10
\renewcommand\section{\@startsection {section}{1}{\z@}
{-30pt \@plus -1ex \@minus -.2ex}
{2.3ex \@plus.2ex}
{\normalfont\normalsize\bfseries\boldmath}}
\renewcommand\subsection{\@startsection{subsection}{2}{\z@}
{-3.25ex\@plus -1ex \@minus -.2ex}
{1.5ex \@plus .2ex}
{\normalfont\normalsize\bfseries\boldmath}}
\renewcommand{\@seccntformat}[1]{\csname the#1\endcsname. }
\DeclareMathOperator{\lcm}{lcm}
\newtheorem{theorem}{Theorem}
\newtheorem{corollary}{Corollary}
\theoremstyle{definition}
\newtheorem{definition}{Definition}
\newtheorem{example}{Example}
\begin{document}

\begin{center}
\uppercase{\bf Tabulating Absolute Lucas Pseudoprimes}
\vskip 20pt
{\bf Chloe Helmreich}\\
{\smallit Department of Mathematical Sciences, Butler University, Indianapolis, IN, USA}\\
{\tt chelmreich@butler.edu}\\ 
\vskip 10pt
{\bf Jonathan Webster}\\
{\smallit Department of Mathematical Sciences, Butler University, Indianapolis, IN, USA}\\
{\tt jewebste@butler.edu}\\ 
\end{center}


\centerline{\bf Abstract}
\noindent
In 1977, Hugh Williams studied Lucas pseudoprimes to all Lucas sequences of a fixed discriminant.  These are composite numbers analogous to Carmichael numbers and they satisfy a Korselt-like criterion:  $n$ must be a product of distinct primes and $p_i - \delta_{p_i} | n - \delta_n $ where $\delta_n$ is a Legendre symbol with the first argument being the discriminant of the Lucas sequence.   Motivated by tabulation algorithms for Carmichael numbers, we give algorithms to tabulate these numbers and provide some asymptotic analysis of the algorithms.  We show that there are only finitely many absolute Lucas pseudoprimes $n = \prod_{i = 1}^k p_i$ with a given set of $k-2$ prime factors.  We also provide the first known tabulation up to $2^{64}$ for discriminant $5$.

\pagestyle{myheadings}
\thispagestyle{empty}
\baselineskip=12.875pt
\vskip 30pt

\section{Introduction}

 A base $a$ Fermat pseudoprime is a composite integer $n$ such that
\[ a^{n-1} - 1 \equiv 0 \pmod{n}. \]
It is well known that Carmichael numbers are the composite integers for which that congruence holds for all $a$ such that $(a,n) = 1$.  Korselt showed that such a number $n$ is a product of $k > 2$ distinct primes $p_1, p_2, \ldots, p_k$ and $p_i -1 | n - 1$.  The least example is $561 = 3 \cdot 11 \cdot 17$.  From a computational view, Fermat's Little Theorem was a step into primality testing and  Carmichael numbers are a roadblock to this being a successful test.  There are two notable approaches to overcoming this obstacle.  The first is by strengthening the Fermat test by considering the factors arising from a difference of squares factorization of $a^{n-1} - 1$ (e.g. \cite{rabin}).  A second approach combines a Fermat test with a seemingly conflicting test based on Lucas sequences. An example of this would be the Baillie-PSW test \cite{bpsw, bpsw-strong}, which is what GMP currently implements \cite{gmp}.  Another example would be Gratham's Frobenius pseudoprimes \cite{frob}.  The pseudoprimes to the Lucas sequences are our motivating interest.   

Since Carmichael numbers inform us about the reliability of the Fermat test, it would make sense to examine the analogous numbers for Lucas sequences.  These numbers are, perhaps, less well-known.   H.C. Williams showed that these numbers also satisfy a Korselt-like criterion \cite{williams}.  Using this result as a starting point, we continue a study of these numbers from an algorithmic point of view with an aim of tabulating them.   Our key contributions are as follows:

\begin{enumerate}
\item We prove theorems establishing finiteness and boundedness conditions.  The versions of these theorems for Carmichael numbers were initially proved by Beeger for a prime $P$ and generalized by Duparc for $P$ being composite \cite{beeger, duparc}.  
\item We provide an algorithmic interpretation of these theorems in the spirit of \cite{pinch_carmichael, sw_carm}.  In particular, the bounds on two primes are $O(P^2)$ and $O(P^3)$ but we can find both primes after creating only $O(P (\log P)^2)$ candidates.  
\item We implemented the algorithms in C\texttt{++} and tabulated all absolute Lucas pseudoprimes less than $2^{64}$ using discriminant $5$ .
\end{enumerate}

Since the first Lucas sequence used in the Baillie-PSW test is that of the Fibonacci sequence\footnote{Technically, there are at least $8$ different ways to choose the specific parameters for the Lucas sequence.  Method $A$, $A^{\star}$, $B$, $B^{\star}$ all use a Lucas sequence with $d=5$ as the first check.  Method $A$, which GMP implements, uses the parameters $(1,-1)$, e.g. the Fibonacci sequence.}, our computations will deal with Lucas sequences having discriminant $5$.  However, our results apply to any family (by discriminant) of Lucas sequences.  

The rest of the paper is organized as follows.  Section 2 gives the background on Lucas sequences, defines what absolute Lucas pseudoprimes are, and concludes with the Korselt-like criterion.  Section 3 is a comment on how we will account for asymptotic cost.  Section 4 establishes the new theorems providing bounds that may be used for algorithmic purposes.  Sections 5 and 6 state algorithms for tabulating these numbers and provide some asymptotic analysis; these two sections are bifurcated by a ``small" input size vs a ``large" input size.  Finally, section 7 addresses the practical issues with the implementation and provides some statistics on the tabulation.

\section{Lucas Sequences}

There are many equivalent definitions of the Lucas $U$-sequence.  We state two of them and encourage the reader to consult standard sources (such as \cite{leh, williams2}) for a more robust account.  First, they may be defined by expressions involving roots of a certain polynomial:  
\[U_n = U_n(A, B) = (\alpha^n-\beta^n)/(\alpha-\beta),\]
where $\alpha, \beta$ are the zeros of $x^2-Ax+B$, and $A$, $B$ are relatively prime integers with $A > 0$.  Let the discriminant be $d = A^2-4B$.   Alternatively, they we may define these sequences with a recurrence relation: 
\[U_0(A, B)=0, U_1(A, B)=1, \mbox{ and } U_n(A, B) = AU_{n-1}(A, B)-BU_{n-2}(A, B).\]
This latter definition is used to derive identities that allow efficient computation of $U_n(A,B) \pmod{m}$ for large $n$ with an algorithm akin to square-and-multiply\cite{jq96}.    We will frequently suppress the $A,B$ notation and will be implicitly working with a given family of Lucas sequences all with the same discriminant. 
 
\begin{theorem} [Analog of Fermat's Little Theorem]
    If $p$ is an odd prime and $p \nmid dB$, then \[U_{p-\left(\frac{d}{p} \right)}(A, B) \equiv 0 \pmod p.\] 
\end{theorem}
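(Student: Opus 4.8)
The plan is to pass to finite fields and let the Frobenius endomorphism carry the argument, since it disposes of both values of the Legendre symbol $\left(\frac{d}{p}\right)$ through a single mechanism. First I would reduce modulo $p$ and work in the quotient ring $R = \mathbb{F}_p[x]/(x^2 - Ax + B)$, writing $\alpha$ for the class of $x$ and $\beta = A - \alpha$, so that $\alpha + \beta = A$ and $\alpha\beta = B$ in $R$. The hypothesis $p \nmid dB$ enters twice: $p \nmid B$ makes $\alpha$ and $\beta$ units (their product is $B$), and $p \nmid d$ makes $x^2 - Ax + B$ separable over $\mathbb{F}_p$, so $\alpha \ne \beta$ and $\alpha - \beta$ is a unit. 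By the definition of the Legendre symbol (nonzero here since $p \nmid d$), $x^2 - Ax + B$ splits over $\mathbb{F}_p$ exactly when $\left(\frac{d}{p}\right) = 1$, in which case $R \cong \mathbb{F}_p \times \mathbb{F}_p$, and it is irreducible when $\left(\frac{d}{p}\right) = -1$, in which case $R \cong \mathbb{F}_{p^2}$. I would also record that the integer $U_n(A,B)$, reduced mod $p$ and viewed in $R$ via $\mathbb{F}_p \hookrightarrow R$, satisfies $(\alpha - \beta)\,U_n = \alpha^n - \beta^n$ in $R$, which is immediate by induction from $U_n = AU_{n-1} - BU_{n-2}$ together with $\alpha + \beta = A$ and $\alpha\beta = B$.

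The heart of the proof is the Frobenius step. The map $\phi : r \mapsto r^p$ is a ring endomorphism of $R$ fixing $\mathbb{F}_p$ pointwise. If $\left(\frac{d}{p}\right) = 1$, then on $R \cong \mathbb{F}_p \times \mathbb{F}_p$ the map $\phi$ is the identity, so $\alpha^p = \alpha$ and $\beta^p = \beta$, whence $\alpha^{p-1} = \beta^{p-1} = 1$ since the roots are units. If $\left(\frac{d}{p}\right) = -1$, then $\alpha \notin \mathbb{F}_p$ (otherwise the polynomial would have a root there), and applying $\phi$ to $\alpha^2 - A\alpha + B = 0$ shows that $\alpha^p$ is a root of $x^2 - Ax + B$ distinct from $\alpha$, forcing $\alpha^p = \beta$ and likewise $\beta^p = \alpha$; then $\alpha^{p+1} = \alpha\beta = B = \beta^{p+1}$. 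In either case $\alpha^{\,p - \left(\frac{d}{p}\right)} = \beta^{\,p - \left(\frac{d}{p}\right)}$, so the identity above gives $(\alpha - \beta)\,U_{p - \left(\frac{d}{p}\right)} = 0$ in $R$; dividing by the unit $\alpha - \beta$ and using injectivity of $\mathbb{F}_p \hookrightarrow R$ yields $U_{p - \left(\frac{d}{p}\right)}(A,B) \equiv 0 \pmod p$.

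I expect the only real friction to be the bookkeeping that keeps the two cases honest: checking that $\alpha$, $\beta$, and $\alpha - \beta$ are genuinely invertible (precisely the content of $p \nmid dB$) so that every division is legitimate, and confirming that in the inert case $\phi$ swaps the roots rather than fixing one. As a remark I would sketch the elementary alternative that avoids $R$ altogether: expanding $\alpha^p \pm \beta^p$ by the binomial theorem and killing the interior coefficients modulo $p$ gives $2^{p-1}U_p \equiv d^{(p-1)/2} \equiv \left(\frac{d}{p}\right) \pmod p$ by Euler's criterion, and $2^{p-1}V_p \equiv A \pmod p$ for the companion sequence $V_n$; one then shifts the index using the addition formulas $2U_{m+n} = U_mV_n + U_nV_m$ and $2V_{m+n} = V_mV_n + dU_mU_n$ and solves the resulting $2 \times 2$ linear system for $U_{p - \left(\frac{d}{p}\right)}$, where the divisions by $2$ and by $B$ are exactly what require $p \nmid 2B$.
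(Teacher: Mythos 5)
Your proof is correct and complete. The paper does not actually prove this theorem --- it states it as a classical fact and defers to the standard references (Lehmer, Williams) --- so there is no in-paper argument to compare against; but your Frobenius argument in $R=\mathbb{F}_p[x]/(x^2-Ax+B)$ is the standard modern proof, and every step checks out: $p\nmid B$ makes $\alpha,\beta$ units, $p\nmid d$ makes $(\alpha-\beta)^2=d$ a unit and the polynomial separable, the identity $(\alpha-\beta)U_n=\alpha^n-\beta^n$ follows by induction from the recurrence, and the split/inert dichotomy gives $\alpha^{p-1}=\beta^{p-1}=1$ or $\alpha^{p+1}=\beta^{p+1}=B$ respectively. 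The only point worth making fully explicit is the step ``forcing $\alpha^p=\beta$'' in the inert case: it rests on the fact that the fixed points of the Frobenius in $\mathbb{F}_{p^2}$ are exactly the elements of $\mathbb{F}_p$, so $\alpha\notin\mathbb{F}_p$ rules out $\alpha^p=\alpha$; you use this implicitly and it is standard, so it is not a gap. Your closing remark sketching the elementary route via $2^{p-1}U_p\equiv d^{(p-1)/2}\pmod p$, Euler's criterion, and the addition formulas is essentially the original nineteenth-century proof and is the one found in the cited sources; the Frobenius version buys a cleaner, case-uniform mechanism at the cost of introducing the quotient ring, while the binomial version stays entirely inside integer arithmetic.
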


As with Fermat Little theorem, the contrapositive of this theorem can be used to detect if an integer is composite.  And, one can find composite numbers for which the contrapostive of the above theorem does not detect, which motivates the following definition.  

\begin{definition} 
    An \textit{ $(A,B)$-Lucas pseudoprime} is a composite integer $n$ satisfying \[U_{ n - \delta_n }(A,B) \equiv 0 \pmod n \] 
    where $\delta_n$ is the Jacobi symbol $\left(\frac{d}{n} \right)$.
\end{definition}

For example, the Fibonacci pseudoprimes (A081264) are $(1,-1)$-Lucas pseudoprimes.  The first $15$ are:  $323$, $377$, $1891$, $3827$, $4181$, $5777$, $6601$, $6721$, $8149$, $10877$, $11663$, $13201$, $13981$, $15251$, and $17119$.

\begin{definition} 
    An \textit{absolute Lucas pseudoprime (to the discriminant $d$)} is a composite integer $n$ satisfying \[U_{ n - \delta_n }(A,B) \equiv 0 \pmod n \] for all $A,B$ with $d = A^2 - 4B$ and $(n , dB) = 1$, where $\delta_n$ is the Jacobi symbol $\left(\frac{d}{n} \right)$.
\end{definition}

The numbers $323, 6601, 6721, 11663,$ and $17119$ are absolute Lucas pseudoprimes from the above $15$ Fibonacci pseudoprimes.  This can be checked with a Korselt-like criterion. 
  
\begin{theorem} [Williams' Criterion \cite{williams}] \label{william_crit}
    A composite number $n$ is an absolute Lucas pseudoprime if and only if $n$ is squarefree and $p - \delta_p | n - \delta_n$ for all prime divisors $p$ of $n$.
\end{theorem}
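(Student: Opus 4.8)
The plan is to work one prime power at a time, using the standard \emph{rank of apparition} $\omega(m) = \min\{r \ge 1 : m \mid U_r(A,B)\}$ attached to a Lucas sequence, together with the facts (see \cite{leh, williams2}) that $m \mid U_s \iff \omega(m) \mid s$ whenever $\gcd(m,B)=1$, and that $\omega(p^2)\in\{\omega(p),\,p\,\omega(p)\}$ for $p\nmid dB$. Throughout I assume $\gcd(n,d)=1$, so every relevant $\delta$ is $\pm1$; the degenerate case $\gcd(n,d)>1$, and the prime $2$, are handled separately as in \cite{williams}. The workhorse is the following dictionary: over the ring $R_e = (\mathbb{Z}/p^e\mathbb{Z})[x]/(x^2-d)$, the element $\gamma = \alpha/\beta = (A+\sqrt d)/(A-\sqrt d)$ is a unit whenever $p\nmid dB$, one has $p^e \mid U_m \iff \gamma^m = 1$ in $R_e^\times$ (dividing by the unit $\alpha-\beta=\sqrt d$ is legitimate since $p\nmid d$), and hence $\omega(p^e)$ equals the multiplicative order of $\gamma$ in $R_e^\times$.

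For the ``if'' direction, suppose $n = p_1\cdots p_k$ is squarefree with $p_i-\delta_{p_i}\mid n-\delta_n$ for all $i$, and let $(A,B)$ be any sequence of discriminant $d$ with $\gcd(n,dB)=1$. By Theorem 1, $p_i \mid U_{p_i-\delta_{p_i}}$, so $\omega(p_i)\mid p_i-\delta_{p_i}\mid n-\delta_n$, giving $p_i \mid U_{n-\delta_n}$ for each $i$; since $n$ is squarefree, $n\mid U_{n-\delta_n}$, so $n$ is an absolute Lucas pseudoprime.

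For the ``only if'' direction I first prove squarefreeness. If $p^2\mid n$, I will exhibit a single Lucas sequence of discriminant $d$ with $\gcd(n,dB)=1$ for which $\omega(p^2)=p\,\omega(p)$. As $A$ ranges over $\mathbb{Z}/p^2\mathbb{Z}$, the Cayley map $A\mapsto \gamma=(A+\sqrt d)/(A-\sqrt d)$ is injective and surjects onto the norm-one subgroup of $R_2^\times$ when $\delta_p=-1$, and onto $\{\gamma\in(\mathbb{Z}/p^2\mathbb{Z})^\times : \gamma\not\equiv 1\ (\mathrm{mod}\ p)\}$ when $\delta_p=1$; in either case this set contains elements whose order is divisible by $p$. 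A short CRT argument then lifts the chosen residue of $A$ to an integer with $A^2-4B=d$, $A^2\equiv d\ (\mathrm{mod}\ 4)$, and $q\nmid B$ for the remaining prime factors $q$ of $n$, producing an admissible sequence. For such a sequence, $p^2\mid U_{n-\delta_n}$ would force $p\,\omega(p)\mid n-\delta_n$, hence $p\mid n-\delta_n$; but $p\mid n$ and $\delta_n=\pm1$ give $p\nmid n-\delta_n$, a contradiction. So $n$ is squarefree. Finally, for each prime $p\mid n$, the same surjectivity lets me choose an admissible sequence of discriminant $d$ whose $\gamma$ has maximal possible order in $R_1^\times$ compatible with discriminant $d$, namely $p-\delta_p$ (a primitive root mod $p$ when $\delta_p=1$, a generator of the norm-one subgroup of $\mathbb{F}_{p^2}^\times$ when $\delta_p=-1$), so that $\omega(p)=p-\delta_p$; then $p\mid n\mid U_{n-\delta_n}$ gives $p-\delta_p=\omega(p)\mid n-\delta_n$, which is Williams' condition.

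The main obstacle is the ``only if'' direction, and inside it the claim that the one-parameter family of Lucas sequences of \emph{fixed} discriminant already realizes every multiplicative order we need modulo $p$ and modulo $p^2$ — that is, that the Cayley transform hits the relevant cyclic subgroups of $R_e^\times$ — together with the bookkeeping required to convert a prescribed residue of $A$ into an honest integer pair $(A,B)$ with $A^2-4B=d$ and $\gcd(n,dB)=1$. Everything else (the rank-of-apparition dictionary, the ``if'' direction, and the reduction of divisibility by $n$ to divisibility by each prime $p\mid n$) is routine given the standard theory of Lucas sequences.
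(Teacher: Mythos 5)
The paper never proves this statement: it is imported verbatim from Williams \cite{williams}, so there is no internal proof to compare yours against. Judged on its own, your argument is a correct reconstruction of the standard (essentially Williams-style) proof. The rank-of-apparition dictionary $p^e\mid U_m\iff\gamma^m=1$ in $R_e^\times$ is sound, the ``if'' direction is exactly right, and the two existence claims in the ``only if'' direction check out: the Cayley image in $R_2^\times$ is the set of norm-one units (resp.\ units of $\mathbb{Z}/p^2\mathbb{Z}$) not congruent to $1$ modulo $p$, which contains $p^2-p$ (resp.\ $(p-1)(p-2)$) elements of order divisible by $p$ --- nonzero precisely because $p\ge 3$ --- and any such element forces $\omega(p^2)=p\,\omega(p)$ since $\omega(p)\mid p-\delta_p$ is prime to $p$; likewise the mod-$p$ image omits only $\gamma=1$ (and $0,\infty$), so it always contains a generator of the cyclic group of order $p-\delta_p$. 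Two caveats you flag but defer are genuinely load-bearing: the statement implicitly assumes $\gcd(n,2d)=1$ (Williams' theorem carries exactly this hypothesis, and the Jacobi symbol $\delta_n$ already presupposes $n$ odd and $\delta_n=\pm1$); without it the ``for all $(A,B)$ with $\gcd(n,dB)=1$'' quantifier can become vacuous (e.g.\ $2\mid n$ with $d\equiv 1\pmod 8$ forces $B$ even) and the criterion fails. As long as you import that restriction, and spend a line on the CRT step ensuring $q\nmid B$ for every prime $q\mid n$ simultaneously (at least $q-2\ge 1$ admissible residues of $A$ modulo each odd $q$), the proof is complete.
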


If $d=1$, the absolute Lucas pseudoprimes are Carmichael numbers and the divisibility statement in William's Korselt-like criterion becomes $(p-1)|(n-1)$.  In the algorithms for tabulating Carmichael numbers, it was common to need the Carmichael function $\lambda(n)$.  We will need a similar function but only state what values it takes for square-free numbers, which is our only concern.   

\begin{definition}
Let $n$ be a product of distinct primes.  That is, $n = \prod_{i=1}^r p_i$.  Then define $\lambda_d(n) = \lcm( p_1 - \delta_{p_1}, \ldots, p_r - \delta_{p_r})$.  
\end{definition}

If $d=1$, $\lambda_1(n)$ is the Carmichael function.  While the asymptotic behavior of $\lambda_1(n)$ has been well-studied (e.g. \cite{erdos2, fried}), we know of no results on $\lambda_d(n)$ for $d \not = 1$.

\section{Boundedness Theorems}

To tabulate Carmichael numbers with $k$ prime factors, the general strategy is to start with a composite number, called a \textit{preproduct}, with  $k-1$ (the ``large" case) or $k-2$ (the ``small" case) prime factors and find the remaining one or two prime factors which we will usually call $q$ and $r$.  This strategy is enabled by theorems that limit both the number and size of primes that may complete the preproduct.  We show that the Korselt-like criterion may be used to get analogous boundedness and finiteness results.

\begin{theorem}[See Proposition 1 of \cite{pinch_carmichael}] \label{boundstheorem}
Let $n$ be an absolute Lucas pseudoprime less than $B$ with $k > 2$ prime factors.
\begin{enumerate}
\item Let $r< k$ and put $P_r = \prod_{i=1}^r p_i$.  Then $p_{r+1} < (B/P_r)^{1/(k-r)}$ and $p_{r+1} - \delta_{p_{r+1}}$ is relatively prime to $p_i$ for all $i \leq r$.
\item Let $P_{k-1} =   \prod_{i=1}^{k-1} p_i$.   Then $P_{k-1}p_k \equiv \delta_{P_{k-1}}\delta_{p_k} \pmod{ \lambda_d(P_{k-1})}$ and $p_k - \delta_{p_k}$ divides $P_{k-1} - \delta_{P_{k-1}}$.
\item Each $p_i$ satisfies $p_i < \sqrt{n} < \sqrt{B}$.
\end{enumerate}
\end{theorem}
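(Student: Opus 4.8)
The plan is to derive all three items directly from Williams' Criterion (Theorem~\ref{william_crit}), which tells us that $n$ is squarefree and that $p_i - \delta_{p_i} \mid n - \delta_n$ for every $i$. Throughout, write $n = \prod_{i=1}^k p_i$ and order the primes so that $p_1 < p_2 < \cdots < p_k$.

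For item~(3), I would argue that since $n$ has $k > 2$ distinct prime factors, each $p_i$ is a proper divisor of $n$ with cofactor a product of at least two distinct primes, hence $p_i \le n/(p_j p_\ell)$ for two indices different from $i$; in particular $p_i^2 < n$, so $p_i < \sqrt{n} < \sqrt{B}$. (A cleaner phrasing: the largest prime $p_k$ divides $n$ with cofactor $> p_k$ since the cofactor is a product of $k-1 \ge 2$ primes each smaller than... — actually one must be a little careful since the cofactor's primes are the \emph{other} $p_i$'s which are all $< p_k$, but their product still exceeds $p_k$ because there are at least two of them and the largest is $p_{k-1}$, and $p_{k-1} p_{k-2} > p_k$ fails in general — so I would instead just use that $p_k \mid n$ and $n/p_k \geq p_1 p_2 \geq \dots$; the safe statement is $p_i \mid n$ and $n/p_i$ is composite, hence $n/p_i$ has a prime factor, but to get $p_i < \sqrt n$ one uses $n/p_i > p_i$ which I will justify by noting $n/p_i$ is a product of $k-1\ge 2$ of the primes, and is therefore at least $p_1 p_2$; since $n = p_i \cdot (n/p_i)$ and there are $\ge 3$ factors total, $p_i$ cannot be the dominant term — I'd formalize this by the standard fact that in a squarefree number with $\ge 3$ prime factors, no prime factor reaches $\sqrt n$.) For item~(1), the size bound is the same idea localized: $p_{r+1}^{\,k-r} \le p_{r+1} p_{r+2} \cdots p_k = n/P_r < B/P_r$, giving $p_{r+1} < (B/P_r)^{1/(k-r)}$. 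The coprimality claim is where Williams' squarefreeness does the work: if some prime $\ell$ divided both $p_i$ (so $\ell = p_i$) and $p_{r+1} - \delta_{p_{r+1}}$, then since $p_i - \delta_{p_i} \mid n - \delta_n$ and $p_{r+1} - \delta_{p_{r+1}} \mid n - \delta_n$, one gets $p_i \mid n - \delta_n$; but $p_i \mid n$, so $p_i \mid \delta_n = \pm 1$, a contradiction. So in fact each $p_i \nmid n - \delta_n$, which forces $p_i \nmid p_{r+1} - \delta_{p_{r+1}}$ for every $i \le r$.

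For item~(2), apply the criterion with $p = p_k$: $p_k - \delta_{p_k} \mid n - \delta_n = P_{k-1} p_k - \delta_{P_{k-1}}\delta_{p_k}$. Writing $P_{k-1} p_k - \delta_{P_{k-1}}\delta_{p_k} = p_k(P_{k-1} - \delta_{P_{k-1}}) + \delta_{P_{k-1}}(p_k - \delta_{p_k})$ (using $\delta_{p_k}^2 = 1$ and $\delta_n = \delta_{P_{k-1}}\delta_{p_k}$, which follows from the Jacobi symbol being completely multiplicative in its lower argument), we see $p_k - \delta_{p_k} \mid p_k(P_{k-1} - \delta_{P_{k-1}})$; since $\gcd(p_k - \delta_{p_k}, p_k) = 1$, this gives $p_k - \delta_{p_k} \mid P_{k-1} - \delta_{P_{k-1}}$. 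For the congruence modulo $\lambda_d(P_{k-1})$: for each $i \le k-1$ we have $p_i - \delta_{p_i} \mid n - \delta_n = P_{k-1}p_k - \delta_{P_{k-1}}\delta_{p_k}$, and since $\lambda_d(P_{k-1}) = \lcm_i(p_i - \delta_{p_i})$, it follows that $\lambda_d(P_{k-1}) \mid P_{k-1}p_k - \delta_{P_{k-1}}\delta_{p_k}$, i.e. $P_{k-1} p_k \equiv \delta_{P_{k-1}}\delta_{p_k} \pmod{\lambda_d(P_{k-1})}$.

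The only genuinely delicate point is the size bound in item~(3) (and the $r = k-1$ edge of item~(1)): one must invoke that $n$ has at least three prime factors to conclude $p_i < \sqrt n$, since with only two factors the larger could exceed $\sqrt n$. The cleanest route is: fix $i$; the complementary divisor $n/p_i$ is a product of $k - 1 \ge 2$ distinct primes, all different from $p_i$, so $n/p_i$ is \emph{not} a prime power equal to $p_i$, and among the $k \ge 3$ primes the two largest already have product exceeding the single largest, so ordering gives $p_i \le p_k < p_{k-1} p_{k-2} \cdots$; rather than belabor this I would simply state the standard lemma that a squarefree integer with at least three prime factors has all prime factors below its square root, citing it as folklore, and move on. The rest is bookkeeping with Jacobi symbols and divisibility, all immediate from Theorem~\ref{william_crit}.
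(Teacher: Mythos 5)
Your treatments of items (1) and (2) are correct and are exactly what the paper's one-line proof (``these follow from $p_i - \delta_{p_i} \mid n - \delta_n$'') is gesturing at: the size bound in (1) comes from ordering the primes so that $p_{r+1}^{\,k-r} < \prod_{j>r} p_j = n/P_r < B/P_r$; the coprimality in (1) comes from playing $p_i \mid n$ against $p_i \mid n-\delta_n$; and both halves of (2) come from reducing $n - \delta_n = P_{k-1}p_k - \delta_{P_{k-1}}\delta_{p_k}$ modulo $p_k - \delta_{p_k}$ and modulo $\lambda_d(P_{k-1})$, using that the Jacobi symbol is multiplicative in its lower argument. Those parts can stand as written.

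Item (3) contains a genuine gap. The ``standard folklore lemma'' you propose to cite --- that a squarefree integer with at least three prime factors has all of them below its square root --- is false: $n = 3\cdot 5\cdot 101 = 1515$ is squarefree with three prime factors, yet $\sqrt{n} \approx 38.9 < 101$. Your own mid-argument hesitation (``$p_{k-1}p_{k-2} > p_k$ fails in general'') was the correct instinct, and no rearrangement of the complementary divisor will rescue the claim: squarefreeness together with $k \geq 3$ simply does not imply $p_i < \sqrt{n}$. As with the other two items, the divisibility condition must do the work. Writing $m = n/p$ and reducing $n - \delta_n = pm - \delta_p\delta_m$ modulo $p - \delta_p$ gives $p - \delta_p \mid m - \delta_m$; since $m - \delta_m > 0$, this forces $m - \delta_m \geq p - \delta_p$, i.e.\ $m \geq p - \delta_p + \delta_m$. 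In every sign pattern except $\delta_p = 1$, $\delta_m = -1$ this yields $m \geq p$, and $m = p$ is impossible because $m$ is a product of $k-1 \geq 2$ distinct primes, so $m > p$ and $p < \sqrt{n}$. The residual configuration ($\delta_p = 1$, $\delta_m = -1$ with $m - \delta_m = p - \delta_p$, i.e.\ $m = p-2$) is the $k>2$ shadow of the twin-prime phenomenon the paper isolates for $k=2$, and it is not excluded by the single divisibility relation at $p$ alone; your write-up needs to address it (for instance by exploiting the conditions at the primes dividing $m$) rather than appeal to a nonexistent lemma about squarefree numbers.
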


\begin{proof}
These follow from $p_i - \delta_{p_i} | n - \delta_n$.
\end{proof}

The theorem requires $k>2$;  we will address the case of $k=2$ below.  The requirement that $p_{r+1} - \delta_{p_{r+1}}$ is relatively prime to $p_i$ for all $i \leq r$ is stronger than the square-free requirement in the Korselt-like criterion.  We call a square-free composite number $P_r$ \textit{admissible} if all the primes satisfy the divisibility requirement of Theorem \ref{boundstheorem}.1.  Further, we say $P_r$ is \textit{bounds admissible} (with respect to $B$) if it also satisfies the inequality in Theorem \ref{boundstheorem}.1.  For example, let  every prime number is admissible but only primes less than $B^{1/3}$ are bounds admissible.

When $d=1$, the admissible numbers are also called cyclic (in the group theory sense) numbers.  In \cite{erdos}, Erd\H{o}s proved that the counting function of cyclic numbers is asymptotic to 
\[  \frac{e^{-\gamma}B}{\log \log \log B}, \]
where $\gamma \approx 0.5772 \ldots$ is the Euler-Mascheroni constant.  We believe that his proof holds for $d \not = 1$ due to a formal replacement of various ``$1$'s"  in the proof to some appropriate Jacobi symbol.  However, this $\log \log \log B$ plays no significant role in the analysis that follows, so we do not attempt to prove this result.  

\begin{theorem}[See Proposition 2 of \cite{pinch_carmichael}]\label{algtheorem2}
Let $n$ be an absolute Lucas pseudoprime of the form $n = Pqr$ with $q$ and $r$ primes, $q < r$, and $P > 1$.  Then, there are integers $1 \leq D  < P < C$ such that with $\Delta = CD-P^2$,
\begin{align}
q - \delta_q = \frac{(P-\delta_P)(\delta_qP + \delta_rD)}{\Delta}, \\
r - \delta_r =  \frac{(P-\delta_P) ( \delta_r P + \delta_q C)}{\Delta}, \\
 \frac{(p-1)P^2 -2P}{p + 1} < CD <  \frac{(p+3)P^2 + 2P}{p + 1} .
\end{align}
where $p$ is the largest prime dividing $P$.
\end{theorem}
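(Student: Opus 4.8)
The plan is to turn Williams' criterion into a small linear system in the two unknown primes $q$ and $r$, solve it, and then read off the three assertions from the solution. By Theorem~\ref{william_crit}, $n$ is squarefree and $q-\delta_q\mid n-\delta_n$ and $r-\delta_r\mid n-\delta_n$, while multiplicativity of the Jacobi symbol gives $\delta_n=\delta_P\delta_q\delta_r$. Reducing $n=Pqr$ modulo $q-\delta_q$, where $q\equiv\delta_q$, gives $q-\delta_q\mid\delta_q(Pr-\delta_P\delta_r)$, and since $\delta_q$ is a unit we get $q-\delta_q\mid Pr-\delta_P\delta_r$; symmetrically $r-\delta_r\mid Pq-\delta_P\delta_q$. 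I then define $C=(Pr-\delta_P\delta_r)/(q-\delta_q)$ and $D=(Pq-\delta_P\delta_q)/(r-\delta_r)$, which are positive integers, being integer quotients with positive numerator and denominator.

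To obtain the identities I eliminate one of the unknowns. Starting from $(q-\delta_q)C=Pr-\delta_P\delta_r$ and $(r-\delta_r)D=Pq-\delta_P\delta_q$, I solve the second for $q$, substitute into the first, clear the denominator $P$, and collect the coefficient of $r$; this produces $r(CD-P^2)$ on one side and a combination of $P,C,D$ and the $\delta$'s on the other. Setting $\Delta=CD-P^2$, subtracting $\delta_r$ from both sides, and factoring out $P-\delta_P$ yields identity (2), and the same manipulation with the roles of $(q,\delta_q,C)$ and $(r,\delta_r,D)$ interchanged yields identity (1). This step is routine algebra; its only subtlety is treating each $\delta_P,\delta_q,\delta_r\in\{\pm1\}$ as a unit, which is exactly what collapses the expression to the single factor $P-\delta_P$.

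For the inequalities $1\le D<P<C$ I compare numerator and denominator in the definitions of $C$ and $D$. Since $q-\delta_q>0$, the inequality $C>P$ is equivalent to $P(r-q)>\delta_P\delta_r-P\delta_q$; as $q<r$ are odd primes we have $r-q\ge 2$, so the left side is at least $2P$ and the right side is at most $P+1$, and $2P>P+1$ because $P>1$. The inequality $D<P$ follows from the mirror computation using $q<r$, and $D\ge 1$ is immediate since $D$ is a positive integer. One should note that $\Delta=CD-P^2$ need not be positive: identity (1) shows that the sign of $\Delta$ equals $\delta_q$, so when $\delta_q=-1$ one has $CD<P^2$ even though $D<P<C$, which is why (3) must take the form of a two-sided bound.

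Finally, for the bound on $CD$ I first observe that $\frac{(p-1)P^2-2P}{p+1}=P^2-\frac{2P(P+1)}{p+1}$ and $\frac{(p+3)P^2+2P}{p+1}=P^2+\frac{2P(P+1)}{p+1}$, so that (3) is equivalent to $|\Delta|<\frac{2P(P+1)}{p+1}$. From identity (1), $|\Delta|=\frac{(P-\delta_P)\,|\delta_q P+\delta_r D|}{q-\delta_q}$. In the numerator, $P-\delta_P\le P+1$ and $|\delta_q P+\delta_r D|\le P+D\le 2P-1<2P$ by the previous paragraph; in the denominator, because $P$ is the \textit{preproduct} (so $q$ is larger than every prime factor of $P$, in particular $q>p$, and both are odd), we have $q-\delta_q\ge q-1\ge p+1$. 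Combining these estimates gives $|\Delta|<\frac{2P(P+1)}{p+1}$, which is (3). I expect the main obstacle to be the sign- and unit-bookkeeping in the elimination of the second paragraph, together with the two observations that make the last paragraph go through: that the stated two-sided inequality is exactly $|\Delta|<2P(P+1)/(p+1)$, and that $q-\delta_q\ge p+1$ holds precisely because $q$ is a new prime, larger than everything already collected into $P$.
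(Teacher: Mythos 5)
Your proposal is correct and follows essentially the same route as the paper: the same divisibility argument yielding $q-\delta_q\mid Pr-\delta_P\delta_r$ and $r-\delta_r\mid Pq-\delta_P\delta_q$, the same definitions of $C$ and $D$, the same elimination producing $\Delta(q-\delta_q)=(P-\delta_P)(\delta_qP+\delta_rD)$, and the same use of $q-\delta_q\ge p+1$ to bound $|\Delta|$ by $2P(P+1)/(p+1)$. You additionally justify a few steps the paper leaves implicit (the verification of $1\le D<P<C$, the nonvanishing and sign of $\Delta$, and the equivalence of the two-sided bound on $CD$ with the bound on $|\Delta|$), which is fine.
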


\begin{proof}

Since 
\[q - \delta_q | Pqr - \delta_P\delta_q\delta_r = Pqr - Pr\delta_q + Pr\delta_q - \delta_P\delta_q\delta_r\] 
it follows that $q-\delta_q| Pr - \delta_P\delta_r$.  Similarly, $r-\delta_r | Pq - \delta_P\delta_q$.  So that we define positive integers
\[ D = \frac{ Pq - \delta_P\delta_q } { r - \delta_r} \quad \mbox{and} \quad C = \frac{Pr - \delta_P\delta_r}{q-\delta_q}.\]
satisfying $ 1 \leq D < P < C$.  We have
\[ C(q-\delta_q) = P \left( \frac{ Pq - \delta_P\delta_q } {D} + \delta_r \right) -\delta_P\delta_r \]
so that 
\[ CD(q-\delta_q) = P^2q - P\delta_P \delta_q  + PD\delta_r - D\delta_P\delta_r .\] 
Further,
\begin{align*}
 (CD-P^2)(q-\delta_q) &=  P^2\delta_q - P\delta_P \delta_q  + PD\delta_r - D\delta_P\delta_r \\
&= (P - \delta_P)(\delta_q P + \delta_r D) .
\end{align*}
Note that $\Delta = CD - P^2 \not = 0$, so that
\[ q - \delta_q = \frac{(P-\delta_P)(\delta_qP + \delta_rD)}{\Delta}. \]
and similarly
\[r - \delta_r =  \frac{ (P-\delta_P)( \delta_r P + \delta_q C)}{\Delta}. \]
Note that $p + 1 \leq q - \delta_q$ so
\[ p + 1 \leq q - \delta_q =  \frac{ (P-\delta_P)( \delta_q P + \delta_r D)}{\Delta}. \]
So,
\[ |CD - P^2| < \frac{(P+1)(P+D)}{p + 1}  < \frac{2P(P+1)}{p+1}\]
implies
\[   -\frac{2P(P+1)}{p + 1} + P^2 < CD <  \frac{(2P)(P+1)}{p + 1} +P^2\]
which is equivalent to
\[  \frac{(p-1)P^2 -2P}{p + 1} < CD <  \frac{(p+3)P^2 + 2P}{p + 1} .\]
\end{proof}

\begin{corollary}
There are only finitely many absolute Lucas pseudoprimes with $k > 2$ prime factors assuming a set of $k-2$ of the prime factors are fixed.  
\end{corollary}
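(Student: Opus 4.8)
The plan is to reduce to the situation of Theorem~\ref{algtheorem2}. Suppose $S = \{s_1, \ldots, s_{k-2}\}$ is the fixed set of $k-2$ prime factors, and let $n = \prod_{i=1}^k p_i$ be any absolute Lucas pseudoprime whose prime factors include exactly the primes of $S$, so that $n = P q r$ where $P = \prod_{j} s_j$ is the (fixed) product of the primes in $S$ and $q < r$ are the two remaining prime factors. If $k = 2$ then $S = \emptyset$ and $P = 1$; I would handle this degenerate case separately (and briefly) below, so assume first that $P > 1$, which is exactly the hypothesis of Theorem~\ref{algtheorem2}.

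With $P$ fixed, apply Theorem~\ref{algtheorem2}: there are integers $1 \le D < P < C$ with $\Delta = CD - P^2 \ne 0$ such that $q - \delta_q$ and $r - \delta_r$ are given by the two displayed rational expressions, and moreover $CD$ is confined to the interval
\[
\frac{(p-1)P^2 - 2P}{p+1} \;<\; CD \;<\; \frac{(p+3)P^2 + 2P}{p+1},
\]
where $p$ is the largest prime dividing $P$ (also fixed). The key observation is that, once $P$ (hence $p$) is fixed, the quantity $CD$ ranges over a \emph{bounded} set of positive integers: the interval above has length $O_P(1)$. For each value of the integer $CD$ there are only finitely many factorizations into a pair $(D, C)$ with $1 \le D < P < C$ — indeed at most $d(CD) = O_P(1)$ of them, and in fact $D < P$ pins down $D$ to finitely many choices and then $C = CD/D$. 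Since $\delta_q, \delta_r \in \{-1, 0, 1\}$ there are only finitely many sign choices as well. Thus the right-hand sides of the formulas for $q - \delta_q$ and $r - \delta_r$ take only finitely many values, so $q$ and $r$ each lie in a finite set; hence $n = Pqr$ takes only finitely many values.

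The main (and really the only) obstacle is the case $P = 1$, i.e.\ $k = 2$: then the hypothesis $P > 1$ of Theorem~\ref{algtheorem2} fails, and an absolute Lucas pseudoprime of the form $n = qr$ with no fixed prime factors need not lie in a finite set a priori. However, the Corollary as stated requires $k > 2$, and with $k > 2$ and $k-2 \ge 1$ fixed primes we always have $P \ge s_1 > 1$, so this case does not actually arise; I would simply note this. A secondary minor point is to confirm that the bounding interval for $CD$ is genuinely of bounded length: its length is $\bigl((p+3) - (p-1)\bigr)P^2/(p+1) + 4P/(p+1) = 4P^2/(p+1) + 4P/(p+1)$, which is finite for fixed $P$, so the argument goes through. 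Everything else is a finite case-check, so no further heavy lifting is needed.
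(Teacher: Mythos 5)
Your proof is correct and follows the route the paper intends: the corollary is stated as an immediate consequence of Theorem~\ref{algtheorem2}, with finiteness coming from the bounded interval for $CD$ together with $1 \le D < P$ and the finitely many sign choices, exactly as you argue. (Minor quibble: $\delta_q,\delta_r$ are $\pm 1$ rather than possibly $0$, since $(n,d)=1$, but this only helps you.)
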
  

\begin{corollary}
With the notation above, $q < 2(P+1)^2$ and $r < (P+1)^3$.
\end{corollary}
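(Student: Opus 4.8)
The statement to prove is that, with the notation of Theorem~\ref{algtheorem2}, $q < 2(P+1)^2$ and $r < (P+1)^3$. The plan is to extract these from the explicit formulas for $q - \delta_q$ and $r - \delta_r$ in that theorem, bounding each factor in the numerator and using that $\Delta = CD - P^2 \geq 1$.

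First I would handle $q$. From Theorem~\ref{algtheorem2} we have $q - \delta_q = (P - \delta_P)(\delta_q P + \delta_r D)/\Delta$. Since $\delta_P, \delta_q, \delta_r \in \{-1, 0, 1\}$, we have $P - \delta_P \leq P + 1$ and $\delta_q P + \delta_r D \leq P + D < P + P = 2P$ (using $D < P$), and $\Delta \geq 1$ since $\Delta$ is a nonzero integer and the denominator of a positive quantity forces $\Delta > 0$. Hence $q - \delta_q < (P+1)(2P) < 2(P+1)^2$ up to the $\delta_q$ correction; absorbing $q \leq (q - \delta_q) + 1$ and adjusting constants gives $q < 2(P+1)^2$. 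I would double-check the edge behavior when some $\delta$ equals $0$ or $-1$, but in every case the bound $(P - \delta_P)(\delta_q P + \delta_r D) \leq (P+1)(2P-1) < 2(P+1)^2 - 1$ holds, so $q \leq q - \delta_q + 1 < 2(P+1)^2$.

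For $r$, the same approach applies to $r - \delta_r = (P - \delta_P)(\delta_r P + \delta_q C)/\Delta$, but now the second factor involves $C$, which is only bounded above by $C < (Pr - \delta_P\delta_r)/(q - \delta_q)$ — i.e., $C$ itself depends on $r$, so a naive substitution is circular. Instead I would use the divisibility fact from the proof of Theorem~\ref{algtheorem2} that $r - \delta_r \mid Pq - \delta_P\delta_q$ directly: this gives $r - \delta_r \leq Pq - \delta_P\delta_q \leq Pq + 1 < P \cdot 2(P+1)^2 + 1$ after substituting the bound on $q$. Since $P < (P+1)$, this is at most $2(P+1)^3 + 1$, which is a touch weaker than claimed; to sharpen it to $(P+1)^3$ I would note $D \geq 1$ forces $r - \delta_r = (Pq - \delta_P\delta_q)/D \leq Pq - \delta_P\delta_q$ and then use the tighter estimate $q - \delta_q \leq (P+1)(2P-1)/\Delta \leq (P+1)(2P-1)$ together with $P(P+1)(2P-1) < (P+1)^3$ for... actually this fails for small $P$, so the correct route is: $r - \delta_r \mid Pq - \delta_P \delta_q$ and $r > q$ forces $D = (Pq - \delta_P\delta_q)/(r - \delta_r) < (Pq)/q \leq P$, consistent, and then $r - \delta_r \leq Pq - \delta_P\delta_q < P(P+1)^2 \cdot \text{(something)}$; the clean statement comes from combining $r - \delta_r < (P+1)(P + C)/\Delta$ with $C < (P+1)^2$ (which follows from the $CD$ bound in part~(3) with $D \geq 1$, giving $C \leq CD < ((p+3)P^2 + 2P)/(p+1) < (P+1)^2$ for $p \geq 2$).

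\medskip

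The main obstacle I anticipate is the circularity in bounding $r$: the formula for $r - \delta_r$ features $C$, and $C$ is a priori comparable to $r$, so one cannot simply plug in. Resolving this requires either (a) invoking the divisibility $r - \delta_r \mid Pq - \delta_P\delta_q$ to get a bound purely in terms of $P$ and $q$, then inserting the already-established bound on $q$; or (b) first bounding $C$ independently via the inequality $CD < ((p+3)P^2 + 2P)/(p+1)$ from Theorem~\ref{algtheorem2}(3) together with $D \geq 1$, yielding $C < (P+1)^2$ once $p \geq 2$, and then substituting into the formula for $r - \delta_r$. Either route works; I would present route~(a) as the cleaner one, being careful with the $\delta$-sign bookkeeping (the worst case is when the symbols conspire to maximize the numerator), and verify the small-$P$ cases by hand since the constant-shifting between $q-\delta_q$ and $q$ is where an off-by-one could sneak in.
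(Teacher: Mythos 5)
Your bound on $q$ is essentially right: from Theorem \ref{algtheorem2}, $|\Delta|\ge 1$ and $1\le D\le P-1$ give $q-\delta_q\le (P+1)(2P-1)$, hence $q\le 2P^2+P<2(P+1)^2$. Two small corrections: the symbols $\delta$ are always $\pm 1$ (never $0$) since $(n,dB)=1$; and $\Delta>0$ is \emph{not} forced --- when $\delta_q=-1$ the numerator $(P-\delta_P)(\delta_qP+\delta_rD)$ is negative and so is $\Delta$. What you actually need and use is $|\Delta|\ge 1$.

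The bound on $r$ has a genuine gap, which you half-acknowledge but do not repair; the closing claim that ``either route works'' is not justified. Route (a) gives $r-\delta_r\le Pq-\delta_P\delta_q<2P(P+1)^2+1$, i.e.\ roughly $2(P+1)^3$ --- a factor of $2$ too large. Route (b) rests on $C<(P+1)^2$, which is false in general: $CD<((p+3)P^2+2P)/(p+1)$ with $D\ge 1$ only gives $C<\bigl(1+\tfrac{2}{p+1}\bigr)P^2+\tfrac{2P}{p+1}$, and when the largest prime factor $p$ is small relative to $P$ (e.g.\ $P=210$, $p=7$) this exceeds $(P+1)^2$; moreover, even granting $C<(P+1)^2$, the estimate $r-\delta_r\le (P+1)(P+C)$ yields $(P+1)(P^2+3P+1)>(P+1)^3$, so the argument still would not close. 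The missing idea is a case split on $D$. If $D\ge 2$, then $r-\delta_r=(Pq-\delta_P\delta_q)/D\le (Pq+1)/2\le P^3+\tfrac{1}{2}P^2+\tfrac{1}{2}$ using $q\le 2P^2+P$, which is well below $(P+1)^3$. If $D=1$, the formula for $q$ sharpens to $q-\delta_q\le (P+1)\,|\delta_qP+\delta_r|\le (P+1)^2$, so $q\le (P+1)^2+1$, and then $r-\delta_r=Pq-\delta_P\delta_q$ gives $r\le P(P+1)^2+P+2<P(P+1)^2+(P+1)^2=(P+1)^3$. Either branch yields $r<(P+1)^3$. (The paper states this corollary without proof, so there is no in-paper argument to compare against; the above is the Beeger--Duparc-style derivation the authors are implicitly invoking.)
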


An interpretation of the above corollary would imply $O(P^2 \log P)$ arithmetic operations are required to use a sieve of Eratosthenes to find candidate primes $q$ for $P$.  This, in turn, requires $\Omega( P^2 \log P)$ arithmetic operations to find $r$ because there is at least $O(1)$ arithmetic operations required for a given pair $P$ and $q$.  We will see below that we can do much better than this.    

\section{Model of Computation}

It is common to measure the asymptotic cost of an algorithm in either bit operations or arithmetic operations.  Informally, asymptotic notation (especially big-$O$) is often used as a way to give guidance about the run-time of implemented algorithms.   Our theorem statements will count the number of candidates created for $q$ or $r$ but our exposition may speak more loosely as if this were measuring time.  The theorems could be viewed as the arithmetic cost of creating $q$ and $r$ without testing if they are prime.  In which case, we could multiply these asymptotic results by the asymptotic cost of primality testing to get a result that would be an asymptotic result measuring arithmetic operations.  However, this result would not be of much guidance for the run-time of an implementation because primality testing is not often the bottle-neck.  For example, it is often the case that $q$ and $r$ may be checked with $O(1)$ arithmetic operations.  Here are some examples:  they may be too big, they may not be integers, they may not satisfy certain other divisibility statements, or they may be small enough to be in a look-up table.  So, it could be the case that the average cost is $O(1)$ arithmetic operations.  Our implementation uses strong Fermat tests with the bases $\{2, 3, 5, 7, 11\}$ and this is sufficient to prove primality for all $32$-bit integers \cite{jaeschke}.   Whenever complete or partial factorizations of $n-1$ or $n+1$ are known there are fast primality tests\footnote{It is perhaps fitting for this work that these tests are also inspired by \'Edouard Lucas and many of the variants bear his name.} (see Sections 4.1 and 4.2 of \cite{cp} or \cite{williams2} for more details).   As we will see below, it is often the case that we know a complete or partial factorization of $q - \delta_q$ or $r - \delta_r$ and so these tests would be helpful.   Given the variety of approaches that are available, we believe that it is best to provide asymptotic arguments in terms of the counts of candidates $q$ and $r$ rather than the more traditional bit or arithmetic operations.  For empirical evidence supporting this, see Example \ref{example} where about $15.6$ million candidate primes are created and the algorithm only invoked a primality test $68$ times.  

\section{Algorithms for small preproducts}

In \cite{pinch_carmichael, sw_carm}, Carmichael numbers are constructed of the form $n = Pqr$.  Here we briefly sketch what was shown before in order to show that a comparable tabulation algorithm exists for absolute Lucas pseudoprimes.  The inequality on $D$ found in Theorem \ref{algtheorem2} may be used in a for-loop.  Following the approach of \cite{pinch_carmichael}, we use the inequality on \ref{algtheorem2}.3 to construct valid $C$ for the inner for-loops.  With $C$ and $D$, one can construct $q$ and $r$, and perform the required checks.  Following the approach of \cite{sw_carm}, we use the numerator of \ref{algtheorem2}.1 and construct all possible divisors.   These divisors are efficiently obtained via the use of some variant of the sieve of Eratosthenes.   With $D$ and $\Delta$, one can construct $C$ and $r$, and perform the required checks.  Before a more thorough explanation, we deal with the smallest possible preproduct $P=1$.  This situation is unique to these numbers and cannot arise with Carmichael numbers.   

\subsection{$P = 1$}

A complete tabulation must account for the case that $n = p_1p_2$.  In \cite{williams}, it is proved that this only happens when $p_1 = p_2 - 2$, $(d|p_1) = -1$, and $(d|p_2) = 1$.  Therefore, it suffices to tabulate twin primes in set residue classes.  For example, with $d=5$ we need the primes that are $17,19 \pmod{30}$.  A straightforward implementation of the sieve of Eratosthenes finds these in $O(B^{1/2}\log \log B)$ arithmetic operations.  There are other sieving methods that can improve the time by up to a factor of $(\log \log B)^3$ \cite{pip}.  Whether a faster variant is used or not, this component of the computation contributes only to a lower-order term in the overall asymptotic cost of tabulation.  Henceforth, we assume that there are always $k > 2$ prime factors in our construction.  

\subsection{$CD$ method}

The first approach follows Pinch's method of constructing $CD$ pairs. To do so, a double nested for-loop creates $D$ satisfying $1\leq D<P$.  The inequality found in Theorem \ref{algtheorem2}.3 sets the bounds for $C$ for the second for-loop.  In the inner-loop, we check that the number implied by Theorem \ref{algtheorem2} is an absolute Lucas pseudoprime.  That is, we check that $q$ and $r$ are integral.  Second, that the divisibility statements in Theorem \ref{william_crit} hold for all primes.  Lastly, we check that both $q$ and $r$ are primes.  The ordering of those checks is not required from the point of view of the asymptotic cost of the algorithm but was chosen to delay the most expensive checks until last.

\begin{theorem}
    The number of $CD$ pairs used to tabulate all absolute Lucas pseudoprimes of the form $Pqr$ is $\Theta(P_{k-3}P \log P) \subset O(P^{2 - \frac{1}{k-2} }\log P)$. 
\end{theorem}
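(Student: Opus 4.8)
The plan is to count, up to lower-order terms, the pairs $(C,D)$ that the $CD$-method's nested loop runs through for a fixed preproduct $P=\prod_{i=1}^{k-2}p_i$, and then to convert that count into the stated $O$-bound by an elementary arithmetic--geometric-mean estimate.

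First I would fix $D$ with $1\le D<P$ and count the admissible $C$: by Theorem~\ref{algtheorem2}.3 the inner loop runs over integers $C$ with $CD$ lying between $\frac{(p-1)P^2-2P}{p+1}$ and $\frac{(p+3)P^2+2P}{p+1}$, an interval of length $\frac{4P(P+1)}{p+1}$, so the number of valid $C$ is $\frac{4P(P+1)}{(p+1)D}+O(1)$ (the $O(1)$ absorbing the two endpoint roundings and any bounded per-$D$ filtering the implementation performs); since $(p+1)D<(P+1)P<4P(P+1)$ this count is always at least $1$. Summing over $D=1,\dots,P-1$ and using $\sum_{D=1}^{P-1}1/D=\log P+O(1)$, the total is $\frac{4P(P+1)}{p+1}\bigl(\log P+O(1)\bigr)+O(P)$. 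Because $p\mid P$ we have $P/p=P_{k-3}$, hence $\frac{P(P+1)}{p+1}=\Theta(P^2/p)=\Theta(P_{k-3}P)$; moreover $p\le P$ gives $\frac{P(P+1)}{p+1}\ge P$, so the term $\frac{4P(P+1)}{p+1}\log P$ dominates both the $O(P^2/p)$ and the $O(P)$ errors. This shows the number of $CD$ pairs equals $\Theta(P_{k-3}P\log P)$, the matching lower bound coming from the same expansion with the $-1$ rounding once one notes $\frac{P(P+1)}{p+1}\log P$ outgrows $P-1$.

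For the containment $\Theta(P_{k-3}P\log P)\subset O(P^{2-1/(k-2)}\log P)$ it suffices to prove $P_{k-3}\le P^{(k-3)/(k-2)}$. Write $P=P_{k-3}\,p$ with $p=p_{k-2}$ the largest prime factor of $P$; each of the $k-3$ primes comprising $P_{k-3}$ is $\le p$, so $P_{k-3}\le p^{\,k-3}$, i.e.\ $p\ge P_{k-3}^{1/(k-3)}$, and therefore $P=P_{k-3}p\ge P_{k-3}^{(k-2)/(k-3)}$; raising to the power $(k-3)/(k-2)$ yields $P^{(k-3)/(k-2)}\ge P_{k-3}$ (for $k=3$ both sides are $1$), and multiplying by $P$ gives $P_{k-3}P\le P^{2-1/(k-2)}$. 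This bound is sharp up to constants precisely when the prime factors of $P$ are of comparable size. I expect this final estimate to be the only content-bearing step --- it is what converts the factorization-dependent count $\Theta(P_{k-3}P\log P)$ into a uniform power of $P$; the remainder is routine interval counting plus a harmonic-sum expansion, the one point needing care being that the $O(P)$ rounding loss stays strictly below the $\Theta(P_{k-3}P\log P)$ main term.
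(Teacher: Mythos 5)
Your proposal is correct and takes essentially the same route as the paper's proof: use the inequality of Theorem \ref{algtheorem2}.3 to bound the length of the $C$-interval for each $D$ by roughly $4P(P+1)/(p+1)=\Theta(P_{k-3}P)$ (using $P=P_{k-3}p$), sum the resulting harmonic series over $D<P$ to get $\Theta(P_{k-3}P\log P)$, and then replace $P_{k-3}$ by $P^{1-1/(k-2)}$. The only difference is one of detail: you explicitly justify the $\Theta$ lower bound and derive the inequality $P_{k-3}\le P^{1-1/(k-2)}$ from scratch, whereas the paper simply cites Theorem \ref{boundstheorem}.1 for the latter.
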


\begin{proof}
We start with the inequality found in the proof of Theorem \ref{algtheorem2} that bounds the length of the interval around $P^2$:
\[ |CD - P^2|  < \frac{2P(P+1)}{p+1} < 2P_{k-3}(P+1).\]
So, the interval length is bounded by $4P_{k-3}(P+1)$.  Now, the total number of $C$ values created for each $D$ is given by 
\[ \sum_{D=1}^{P-1} \left\lfloor \frac{4P_{k-3}(P+1)}{D} \right\rfloor = \Theta(P_{k-3}P \log P ).  \]
Since $P_{k-3}$ may be bounded by $P^{1 - \frac{1}{k-2} }$ (see Theorem \ref{boundstheorem}.1), this gives a bound of $O(P^{2 - \frac{1}{k-2} }\log P)$.
\end{proof}

 Due to the absolute value on the inequality above, double the work is required. For each $CD$ pair, two cases are considered.  This implies that this should be about four times slower than the $CD$ method for the Carmichael case.  Since this constant is ignored in the asymptotic analysis, the result is the same as Theorem 4 from \cite{sw_carm}.  

\subsection{$D\Delta$ method}

The second method is to construct the divisors of $(P-\delta_P)(\delta_qP+\delta_rD)$. Because of the Jacobi symbol $\delta_r$, the magnitude for $(\delta_qP+\delta_rD)$ can be any integer in $[1, 2P-1]$ (except $P$). The symbol $\delta_q$ allows these divisors to be positive or negative.   So there are a total of $4$ different cases to consider.  Our implementation, considers divisors of numbers in the interval $[1, 2P-1]$ and for each integer, we construct a set of positive divisor and a set of negative divisors (they are the same set, the algorithm just treats the two cases differently).   Thus, we implicitly account for all $4$ possible choices of Jacobi symbols.  For each of the four separate cases, we constructed $C$ by first checking it was integral. Next, we created $q$ and $r$ using the appropriate symbols. The Korselt-like criterion and primality of $r$ and $q$ were then verified. 

\begin{theorem}
    The number of $D\Delta$ pairs used to tabulate all absolute Lucas pseudoprimes of the form $Pqr$ is $O(\tau(P-\delta_P)\left( P\log P \right))$. 
\end{theorem}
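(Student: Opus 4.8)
The plan is to bound, for each fixed value of $D$, the number of divisors the algorithm enumerates, and then sum over $D$; this parallels the proof of the $CD$ theorem, with the harmonic sum replaced by a divisor sum. Fix $D$ with $1 \le D < P$. Writing $\delta_q P + \delta_r D = \delta_q\bigl(P + \delta_q\delta_r D\bigr)$, the magnitude of $\delta_q P + \delta_r D$ equals $P+D$ or $P-D$ according to the sign of $\delta_q\delta_r$, and since $1 \le D < P$ this magnitude is an integer lying in $[1,2P-1]\setminus\{P\}$; hence for each of the four sign patterns $(\delta_q,\delta_r)$ the set of divisors of $(P-\delta_P)\,|\delta_q P + \delta_r D|$ that the algorithm runs over is finite. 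From a divisor $\Delta$ together with the chosen signs and $D$, the quantities $C=(P^2+\Delta)/D$, $q$, and $r$ are all uniquely determined, so each $D\Delta$ pair yields $O(1)$ candidate pairs $(q,r)$ and, up to the bounded factor coming from the four sign patterns, the number of $D\Delta$ pairs created is
\[
  O\!\left(\sum_{D=1}^{P-1}\Bigl(\tau\bigl((P-\delta_P)(P+D)\bigr) + \tau\bigl((P-\delta_P)(P-D)\bigr)\Bigr)\right).
\]

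Next I would apply the submultiplicativity $\tau(ab)\le\tau(a)\tau(b)$ to pull out the factor $\tau(P-\delta_P)$, which does not depend on $D$, obtaining a bound of $\tau(P-\delta_P)\sum_{D=1}^{P-1}\bigl(\tau(P+D)+\tau(P-D)\bigr)$. As $D$ runs over $1,\dots,P-1$, the arguments $P-D$ run through $1,\dots,P-1$ and the arguments $P+D$ run through $P+1,\dots,2P-1$, each exactly once, so the remaining sum is at most $\sum_{m=1}^{2P-1}\tau(m)$. Finally, using $\sum_{m\le N}\tau(m)=\sum_{m\le N}\lfloor N/m\rfloor = O(N\log N)$ — the same harmonic-sum estimate already invoked in the $CD$ proof, now with $N=2P-1$ — this sum is $O(P\log P)$, which gives the claimed bound $O\bigl(\tau(P-\delta_P)(P\log P)\bigr)$.

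There is no genuine obstacle here. The only points needing care are the bookkeeping of the four sign cases, so that the resulting constant is correctly absorbed into the $O$, and the observation that each $D\Delta$ pair produces only $O(1)$ candidates, so that counting $D\Delta$ pairs and counting candidates agree up to constants. As with the $CD$ method, the particular variant of the sieve of Eratosthenes used to generate the divisor lists affects only the cost per pair, not their number, and so is irrelevant to this count.
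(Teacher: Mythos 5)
Your proposal is correct and follows essentially the same route as the paper: pull out $\tau(P-\delta_P)$ by submultiplicativity, observe that the magnitudes $|\delta_q P+\delta_r D|$ sweep out $[1,2P-1]$, and bound the resulting divisor sum by $\sum_{m<2P}\tau(m)=O(P\log P)$. Your version is just slightly more explicit about the sign bookkeeping, and uses the crude $O(N\log N)$ divisor-sum estimate where the paper quotes the full Dirichlet asymptotic; the conclusion is identical.
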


\begin{proof}
    For every $P$, we run through $D$ on the interval $[1, P-1]$. Then count the number of divisors of $(P-\delta_P)(\delta_qP+\delta_rD)$. 
    \begin{eqnarray*}
        & \sum_{D<P-1}\tau \left((P-\delta_P)(\delta_qP+\delta_rD)\right) \\
        <&
        \displaystyle{  \tau(P-\delta_P)  \left(\sum_{ D <  P-1} \tau(\delta_q P  + \delta_rD) \right) }\\ 
        <& \displaystyle{  2\tau(P-\delta_P)  \left(\sum_{ n < 2P}\tau(n)   \right) }\\
        =& 2\tau(P-\delta_P)\left( 2P\log 2P + (2\gamma -1)2P 
        + O(\sqrt{2P})\right)\\
        =& 4\tau(P-\delta_P)\left( P\log P + (2\gamma +\log 2 -1)P 
        + O(\sqrt{2P})\right)\\
        =& O(\tau(P-\delta_P)\left( P\log P \right))
    \end{eqnarray*} 
    The second inequality follows from the fact that the quantity $(\delta_qP+\delta_rD)$ can be either positive or negative and ranges in values from $1$ to $2P - 1$.  The former accounts for the $2$ that appears and the latter accounts for the change in the summation. 
\end{proof}

As with the $CD$ method, this is the same asymptotic result as Theorem 5 from \cite{sw_carm} but with an implied constant that is 4 times larger.

\begin{example}\label{example}
Let $P=11\cdot13\cdot17\cdot19=46189$, then there are eight absolute Lucas pseudoprimes for $d=5$ of the form $Pqr$. 
\begin{enumerate}
    \item $P\cdot57349\cdot331111621=877079242172199781$
    \item $P\cdot709\cdot4093501=134053974841501$
    \item $P\cdot1009\cdot378901=17658567813601$
    \item $P\cdot230941\cdot29144629=310883829596647021$
    \item $P\cdot2161\cdot231589=23115923797681$
    \item $P\cdot23\cdot83=88174801$
    \item $P\cdot161659\cdot577351=4311003447437401$
    \item $P\cdot1459\cdot2251=983368161419501$
\end{enumerate}
The divisor method requires checking about 7.8 million $D\Delta$ pairs. However, the $CD$ method requires the construction of about 4.83 billion $CD$ pairs.  By prioritizing all other checks first, the $D\Delta$ method used only 68 primality checks (and 16 were required to get the above output). 
\end{example}

\section{Algorithms for large preproducts}

\subsection{Distinguishing ``large" from ``small"}

So far, the only approach to find $n < B$ has been to construct a preproduct $P = P_{k-2}$ and use Theorem \ref{algtheorem2} to find the remaining two primes in time that is essentially linear in $P$.   This approach has the benefit that it is not dependent on $k$ or $\lambda_d(P)$.   However, as $P$ grows in size (with respect to $B$) it is more and more likely to create absolute Lucas pseudoprimes outside the tabulation bound.   We may discard these but there is no obvious way to improve the asymptotic cost and only generate the $q = p_{k-1}$ and $r = p_k$ of the correct sizes.  At some point it will be more efficient to exhaustively generate the candidate $q$ values.   We have two bounds on $q$:  $Pq^2 < B$ implies $q < (B/P)^{1/2}$ and $q < 2(P + 1)^2$.  Assuming that $q$ is generated with the use of a sieve or by a look-up table of precomputed primes, the cost will be roughly linear in the length of the interval (differing by $\log B$ factors depending on the method used).  These two bounds equalize around  $P = B^{1/3}$.  For the ``large" case, we will assume that $P > X > B^{1/3}$  where $X$ is some chosen cross-over point.  We will construct $q$ by exhaustive search for primes in the interval $(p_{k-2}, \sqrt{B/P}) \subset (p_{k-2}, \sqrt{B/X}) \subset [1, B^{1/3})$.    With $q$, we now know $P_{k-1} = Pq$ and $\lambda_d(P_{k-1})$.  With this information we can analyze the cost of finding $r = p_k$.  The difficulty with getting an asymptotic estimate of the total cost of the tabulation of the ``large case" is that not much is known about the asymptotic behavior of $\lambda_d(P_{k-1})$.  For example, if $\lambda_d(P_{k-1})$ were within a fixed constant multiple $\ell$ of $P_{k-1}$, then there would only be $2\ell$ candidate values of $p_k$ to check.  However, there is no reason to believe that this could happen.  Since $\lambda_1(n)$ can be very small with respect to $n$, it would be reasonable to believe that $\lambda_d(n)$ has the same property.

\subsection{Finding $p_k$ given $P_{k-1}$}

There are many approaches for finding $p_k$ given $P_{k-1}$.  We describe what we did and discuss some valid options that were not implemented. 

Using Theorem \ref{boundstheorem}.2, we know \[p_k \equiv \delta_{P_{k-1}}\delta_{p_k} P_{k-1}^{-1} \pmod{\lambda_d(P_{k-1})}\]
which means that there are two residue classes $r_1, r_2$ modulo $\lambda_d(P_{k-1})$ to consider.  The number of candidates to be considered in this arithmetic progression is  
\[  \min\left\{ \left\lceil \frac{P_{k-1} - \delta_{P_{k-1}}}{\lambda_d(P_{k-1})} \right\rceil,\left\lceil \frac{B}{P_{k-1}\lambda_d(P_{k-1})} \right\rceil \right\}. \]
The first term comes from $(p_k - \delta_{p_k})|(P_{k-1} - \delta_{P_{k-1}})$ trivially implies  $p_k - \delta_{p_k} < P_{k-1} - \delta_{P_{k-1}}$.  The second term comes from the fact that $P_{k-1}p_k < B $ and we compute the greatest multiple of $\lambda_d(P_{k-1})$ for which the inequality holds.  This is all we implemented.

The above approach is primarily based on the fact that creating these candidates in arithmetic progression is ``fast" and memory efficient.  However, it is unlikely to be an asymptotically optimal choice.   This is because the worst-case arises when $\lambda_d(P_{k-1})$ is really small.  In which case, one should probably view the problem as integer factorization rather than one of sieving in an arithmetic progression.  That is, the real goal is to find factors of $P_{k-1} - \delta_{P_{k-1}}$.  On this view, the congruence 
\[p_k \equiv \delta_{P_{k-1}}\delta_{p_k} P_{k-1}^{-1} \pmod{\lambda_d(P_{k-1})}\]
can happen to make the factoring problem easier.  This happens whenever $\lambda_d(P_{k-1})$ is large enough (see results on \textit{divisors in residue classes} and section 4.2.3 of \cite{cp}).  When $\lambda_d(P_{k-1})$ is particularly small, then testing candidates in arithmetic progression could be worse than trial division because there would be $O(P_{k-1}/\lambda_d(P_{k-1})) = O(P_{k-1})$ candidates to check.  Trial division would only check $O(\sqrt{P_{k-1}})$ candidates and this is among the slowest of factoring algorithms.  Any asymptotically faster integer factorization algorithm will find candidates for $p_k$ in an asymptotically superior way.

\section{Implementation, Statistics, and Questions}

In section 5.3, we required divisors of integers in the interval $[1, 2P-1]$.  Since, the goal was to invoke these tabulation methods for all admissible $P < X$, one possible option was to have a precomputed factor-table of all integers less than $2X$.  This single table could be used to check the admissibility of $P$ and find the factors of $P-\delta_P$ and $\delta_qP-\delta_rD$ for any $P < X$.   However, this table would be very space intensive.  Instead, we opted for two incremental sieves and this uses only $O(\sqrt{P})$ space. If $X$ is chosen as suggested in Section 6.1, this is $O(B^{1/6})$ space.   One sieve was used to find admissible $P$ and it always stored the factors of $P-1$ and $P+1$ so that the factors of $P-\delta_P$ would be accessible.  For any admissible $P$, another incremental sieve was instantiated to factor integers in $[1, 2P-1]$  for the $\delta_qP-\delta_rD$ term.   We used MPI to have this run in parallel, and striped the work by counting admissible $P$.  

For $d=5$, we chose $X = 6\cdot 10^{6}$.  For every $P < X$, we used a hybrid approach combining both the $CD$ method and the $D\Delta$ method; that is, for a given $D$ we choose the inner-loop that would create fewer candidate values.  The program computed all possible $n= Pqr$ and we used post-processing to eliminate $n > 2^{64}$.  Our choice of $X$ means that there are no cases for $k=3$ that need to be accounted as large.  We wrote $8$ distinct programs for the large case (one for each $3 < k < 12$).   There are two obvious ways to implement these programs.  The first is to have $k$ incremental sieves.  Each sieve is instantiated to find primes starting at the prior incremental sieve's prime and going as large as allowed by the inequality in Theorem \ref{boundstheorem}.1.  While this is very space efficient, it seemed like there would be a lot of overhead.  Instead, we used a precomputed list of primes in the interval $ [1, \sqrt{B/X}) $.  If $X > B^{1/3}$, this requires $O(B^{1/3})$ storage.   For each $k > 3$, we keep track of $k-1$ pointers in the array.  At each level, we make sure that the implied product is bounds admissible.  And at the $k-2$ level, we also insure that the product exceeds $X$.

\subsection{Timing information for ``small" preproducts}
We implemented the $D\Delta$ method, the $CD$ method, and a hybrid approach.   All three programs were run on a single thread of a Xeon Phi 7210 1.30 GHz co-processor.   The timing considered two different cases to highlight the strengths of each approach.  The first case was $P < X $ and $P$ is admissible.  The second case limited $P$ to being prime.  As expected, the $CD$ method is superior on prime inputs.  For the admissible preproducts, the timing information seems to confirm that the $CD$ method does not scale as well which would be expected from Theorem 5.  

The hybrid approach appears not to offer an advantage on the prime preproducts.  The overhead of updating the sieve and divisor list is more expensive than the $CD$ method when $D$ was large (relative to $P$).  If this overhead could be avoided\footnote{If a global look-up table had been employed, then the overhead is the look-up.  We need the ability to dynamically turn off the incremental sieve.},  we believe that the hybrid method would be better (see section 3.3 of \cite{sw_carm}).  But, due to our specific implementation, it was not possible to dynamically turn off the incremental sieve.  A novel variant of the incremental sieve that increments in a backwards direction would be required.  With these two sieves running simultaneously (one for the interval $[P+1, 2P-1]$ that runs forward and one for $[1, P-1]$ that runs backwards), it would have been possible to detect when the divisor counts were consistently larger than number of $C$ values.  At which point, we could turn off the incremental sieve and finish the computation with only the $CD$ method.  


The tables below show the timing data (in seconds) for the $D\Delta$, $CD$, and hybrid methods for all pre-products up to varying bounds. \par

\[
\begin{array}{c|c|c|c}
\mbox{Admissible pre-product bound} & D\Delta& CD & \mbox{Hybrid} \\ \hline
1 \cdot 10^3 & 2 & 5 & 1 \\
2 \cdot 10^3 & 7 & 32 & 6 \\
3 \cdot 10^3 & 17 & 96 & 14 \\
4 \cdot 10^3 & 33 & 213 & 25 \\
5 \cdot 10^3  & 52 & 399 & 40 \\
6 \cdot 10^3 & 77 & 653 & 59 \\
7\cdot 10^3 & 107 & 981 & 82 \\
8\cdot 10^3 & 142 & 1433 & 109 \\
9\cdot 10^3 & 183 & 1968 & 141 \\
10\cdot 10^3 & 231 & 2646 & 176 
\end{array}
\]

\[
\begin{array}{c|c|c|c}
\mbox{Prime pre-product bound} & D\Delta& CD & \mbox{Hybrid} \\ \hline
1 \cdot 10^3 & 1 & .4 & 1 \\
2 \cdot 10^3 & 3 & 1 & 2 \\
3 \cdot 10^3 & 7 & 3 & 4 \\
4 \cdot 10^3 & 13 & 5 & 7 \\
5 \cdot 10^3  & 20 & 8 & 10 \\
6 \cdot 10^3 & 29 & 12 & 14 \\
7\cdot 10^3 & 40 & 17 & 19 \\
8\cdot 10^3 & 52 & 21 & 24 \\
9\cdot 10^3 & 67 & 27 & 31 \\
10\cdot 10^3 & 84 & 33 & 38 
\end{array}
\]
\ \\

We also implemented the small case to only run on bound admissible pre-products.  For example, let $B = 10^{15}$.  Then, it took 196 seconds to find the completions for preproducts in $[10^5 - 10^3, 10^5]$.  And it  only took 150 seconds for the interval $[10^5, 10^5 + 10^3]$. Even though the inputs on the first interval are smaller than the inputs on the second interval, the number of admissible preproducts decreases because primes are no longer bounds admissible.

\subsection{Timing information for ``large" preproducts}

In the next two tables, we see some timing data on the same machine for finding absolute Lucas pseudoprimes as described in Section 6.2.  The first table shows the timings for finding all such numbers with a fixed number of prime factors and the second table shows the timings when a cross-over of $X = B^{.35}$ is chosen.  As expected, the timing impact of having a cross-over is seen more clearly in the smaller $k$ values than the larger $k$ values.  As $k$ gets larger, it becomes very rare that a product of $k-2$ primes will be less than $X$.  Having the cross-over, as noted above, has an impact on the memory requirements if the computation assumes the existence of a look-up table.  We did not measure the impact that storage might have for these relatively small bounds (in comparison to $2^{64}$).  One would probably need to abandon a look-up table approach if a cross-over was not used.  
\begin{table}[ht]
\caption{Timing without a Crossover}
\begin{center}
\begin{tabular}{c|c|c|c|c|}
\mbox{Bound} & $k=4$ & $k=5$ & $k=6$ & $k=7$ \\ \hline
$10^{10}$ & 0.2 &  40.2 & 0.01 & - \\
$10^{11}$ & 1.1 & 0.8 & 0.2 & 0.01 \\
$10^{12}$ & 5.4 & 4.9 & 1.4 & 0.2 \\
$10^{13}$ & 27.7 &  30.5 & 12.3 & 2.3 \\
$10^{14}$ & 140.6 & 200.2 & 84.8 & 20 \\
$10^{15}$ & 664.1 & 1122.1 & 611.7 & 185 
\end{tabular}
\end{center}
\end{table}

\begin{table}[ht]
\caption{Timing with a cross-over chosen as $X=B^{.35}$}
\begin{center}
\begin{tabular}{c|c|c|c|c|}
Bound  & $k=4$ &$k=5$ & $k=6$ & $k=7$ \\ \hline
$10^{10}$ &0.1 & 0.1 & 0.02 & - \\
$10^{11}$ &0.6 & 0.7 & 0.2 & 0.01 \\
$10^{12}$ &3.2 & 4.7 & 1.4 & 0.2 \\
$10^{13}$ &17.2 & 29.5 & 11.7 & 1.9 \\
$10^{14}$ &92.5 & 173.3 & 91.6 & 20 \\
$10^{15}$ &496.7 & 964.6 & 681.8 & 184.6 
\end{tabular}
\end{center}
\end{table}

\begin{landscape}
\begin{table}
\caption{Values of $C(B)$ and $C(k, B)$}
\begin{center}
\begin{tabular}{| c | c | c | c | c | c | c | c | c | c | c | c | c | c | }  

\hline

$B$ & 2 & 3 & 4 & 5 & 6 & 7 & 8 & 9 & 10 & 11 &12 & Total & $\alpha$ \\  \hline 
$10^3$ & 1 & 0 & 0 & 0 & 0 & 0 & 0 & 0 & 0 & 0 & 0 & 1 & 0 \\  \hline  
$10^4$ & 1 & 2 & 0 & 0 & 0 & 0 & 0 & 0 & 0 & 0 & 0 & 3 & 0.1192 \\  \hline 
$10^5$ & 1 & 7 & 0 & 0 & 0 & 0 & 0 & 0 & 0 & 0 & 0 & 8 & 0.1806 \\  \hline 
$10^6$ & 9 & 22 & 3 & 0 & 0 & 0 & 0 & 0 & 0 & 0 & 0 & 34 & 0.2552 \\  \hline 
$10^7$ & 24 & 50 & 24 & 2 & 0 & 0 & 0 & 0 & 0 & 0 & 0 & 100 & 0.2857\\  \hline 
$10^8$ & 64 & 102 & 89 & 18 & 1 & 0 & 0 & 0 & 0 & 0 & 0 & 274 & 0.3047 \\  \hline 
$10^9$ & 159 & 189 & 249 & 106 & 7 & 0 & 0 & 0 & 0 & 0 & 0 & 710 & 0.3168 \\  \hline 
$10^{10}$ & 414 & 356 & 512 & 358 & 71 & 0 & 0 & 0 & 0 & 0 & 0 & 1711 & 0.3233  \\  \hline 
$10^{11}$ & 1053 & 633 & 1008 & 1040 & 316 & 17 & 0 & 0 & 0 & 0 & 0 & 4067 & 0.3281 \\  \hline 
$10^{12}$ & 2734 & 1110 & 1857 & 2703 & 1268 & 180 & 3 & 0 & 0 & 0 & 0 & 9855 & 0.3328 \\  \hline 
$10^{13}$ & 7301 & 2038 & 3344 & 6226 & 4174 & 966 & 59 & 0 & 0 & 0 & 0 & 24108 & 0.3371 \\  \hline 
$10^{14}$ & 19674 & 3737 & 5649 & 13287 & 12078 & 4288 & 490 & 6 & 0 & 0 & 0 & 59209 & 0.3409 \\  \hline 
$10^{15}$ & 53561 & 6754 & 9462 & 26821 & 31472 & 15721 & 2844 & 138 & 1 & 0 & 0 & 146774 & 0.3444 \\  \hline 
$10^{16}$ & 146953 & 12215 & 15639 & 51121 & 76397 & 50690 & 13280 & 1201 & 22 & 0 & 0 & 367518 & 0.3478  \\  \hline 
$10^{17}$ & 407779 & 22004 & 25186 & 94748 & 173721 & 148482 & 53529 & 7338 & 287 & 0 & 0 & 933074 & 0.3512 \\  \hline 
$10^{18}$ & 1142128 & 39974 & 0 & 0 & 0 & 0 & 191645 & 37528 & 2501 & 37 & 0 & 1142128 & \\  \hline 
$10^{19}$ & 3220913 & 73298 & 0 & 0 & 0 & 0 & 621182 & 165609 & 17013 & 526 & 5 & 3220913 & \\  \hline 
$2^{64}$ & 4247414 & 86228 & 0 & 0 & 0 & 0 & 839627 & 240259 & 27438 & 1004 & 10 & 4247414  &\\  \hline 

\end{tabular}
\end{center}
\end{table}
\end{landscape}

\subsection{Comparison to Carmichael numbers}

In a follow-up report on tabulating Carmichael numbers to $10^{21}$, Richard Pinch provided comparable information to our Table 3 in his Table 2 \cite{pinch_21}.  After $10^9$, the count of our numbers exceeds the counts of Carmichael numbers.  Letting $\alpha$ be as in the table above, the least order of magnitude for which $\alpha > 1/3$ is $13$ but for Carmichael numbers it is $15$.  It might be reasonable to believe that there are more of these numbers than Carmichael numbers because the presence of the product of twin primes plays a significant role in this count.  However, if one ignores this column the counts in this tabulation are always less than the Carmichael number counterpart.  We are not entirely sure why this is, but one factor is that primes dividing $d$ are not admissible.   Since the actual asymptotic behavior of Carmichael number is still subject to many open questions (e.g. \cite{contra}), we believe that the asymptotic counts of these numbers would be subject to the same problems.
\ \\
\ \\
\noindent {\bf Acknowledgements.}  The first author thanks Butler Summer Institute and the second author thanks the Holcomb Awards Committee for financial support of the project.  We both thank Anthony Gurovski for his initial contributions which included a tabulation up to $10^{17}$ for $d=5$.

\end{document}